\DeclareMathOperator*{\argmin}{argmin}
\newcommand{\bq}{\begin{equation}}
\newcommand{\eq}{\end{equation}}
\newcommand{\R}{\mathbb{R}}
\newcommand{\N}{\mathbb{N}}
\newcommand{\abs}[1]{\left\vert#1\right\vert}
\newcommand{\G}{\mathcal{G}}
\newcommand{\bO}{\mathcal{O}}
\newcommand{\Dt}{\mathcal{D}}
\newcommand{\I}{\mathcal{I}}
\newcommand{\Sf}{\mathcal{S}}
\newcommand{\MA}{Monge-Amp\`ere\xspace}
\algnewcommand{\LineComment}[1]{\State \(\triangleright\) #1}
\newtheorem{theorem}{Theorem}
\theoremstyle{lemma}
\newtheorem{lemma}[theorem]{Lemma}
\newtheorem{definition}[theorem]{Definition}
\newtheorem{hypothesis}[theorem]{Hypothesis}
\theoremstyle{remark}
\newcommand\appendix@section[1]{%
\refstepcounter{section}%
\orig@section*{Appendix \@Alph\c@section: #1}%
}
\let\orig@section\section
\g@addto@macro\appendix{\let\section\appendix@section}
\begin{document}

\title[Optimal Transport from 1D to 2D]{Numerical Optimal Transport from 1D to 2D using a Non-local Monge-Amp\`ere Equation}

\author{Matthew A. Cassini}
\address{Department of Mathematical Sciences, New Jersey Institute of Technology, University Heights, Newark, NJ 07102}
\email{mc225@njit.edu}

\author{Brittany Froese Hamfeldt}
\thanks{The second author was partially supported by NSF DMS-2308856.}
\address{Department of Mathematical Sciences, New Jersey Institute of Technology, University Heights, Newark, NJ 07102}
\email{bdfroese@njit.edu}

\begin{abstract}
We consider the numerical solution of the optimal transport problem between densities that are supported on sets of unequal dimension.  Recent work by McCann and Pass reformulates this problem into a non-local Monge-Amp\`ere type equation.  We provide a new level set framework for interpreting this non-linear PDE.  We also propose a novel discretisation that combines carefully constructed monotone finite difference schemes with a variable-support discrete version of the Dirac delta function.  The resulting method is consistent and monotone.  These new techniques are described and implemented in the setting of 1D to 2D transport, but can easily be generalised to higher dimensions.  Several challenging computational tests validate the new numerical method.
\end{abstract}

\date{\today}    
\maketitle

\section{Introduction}\label{sec:intro}

The problem of optimal transport~\cite{villani2021topics} involves finding a mapping $T:X\to Y$ that rearranges a density $f$ supported on $X$ into a density $g$ supported on $Y$ while minimising a given cost function
\bq\label{eq:ot}
\inf\limits_{T\#f = g} \int_\Omega c(x,T(x)) f(x)\,dx.
\eq

This problem has been widely studied, both theoretically and numerically, in the setting where the sets $X$ and $Y$ lie in same ambient space (eg: $X,Y \subset \R^n$).  However, in many important problems this simplifying assumption is not valid.  In this article, we consider the problem of optimal transport between densities that are supported on sets of different dimension ($X \subset \R^n$, $Y\subset \R^m$, $n<m$).  This setting is critical to applications such as economics and game theory ($m \geq n \geq 1$)~\cite{chiappori2017multi,galichon2016optimal,NennaPass}, simulation of atmosphere and ocean dynamics ($2 \leq n \leq 3 = m$)~\cite{cullen2006mathematical}, and Generalized Adversarial Networks ($m \neq n$)~\cite{lin2020gans}.  

Despite the importance of this problem, almost nothing is known about its numerical solution except in the case of a one-dimensional target ($n=1$) combined with significant extra structure on the problem data~\cite{chiappori2017multi}.  Utilizing the linear programming structure of the optimal transport problem is theoretically possible but computationally infeasible as it requires the discretization of an $(m+n)$-dimensional domain.

In this article, we propose the first PDE-based approach to the problem of optimal transportation between unequal dimensions.  The new method we propose relies on a reformulation in terms of a non-local \MA type equation that was recently proposed by 
McCann and Pass~\cite{mccannpass_otunequal}:
\bq\label{eq:PDE}
\begin{cases}
\int_{\partial_cu(x)}\det(D^2u(x) + A(x,y)) \psi(x,y)\, dH_{m-n}(y) = f(x), & x\in X\subset\R^n\\
D^2u(x) + A(x,y) \geq 0, & x \in X, y \in \partial_cu(x)\\
\partial_cu(x) \subset Y \subset \R^m, & x \in X.
\end{cases}
\eq
Here $m \geq n \geq 1$, $H_k$ denotes the $k$-dimensional Haussdorf measure, $\partial_cu$ is the $c$-subgradient of $u$, and the matrix-valued function $A$ and scalar-valued function $\psi$ encode information about the cost and density functions.

While the numerical solution of the \MA equation has received a great deal of attention in recent years~\cite{FO_MATheory,feng2021narrow,WanMA,ObermanWS,benamou2016monotone,feng2017convergent,Nochetto_MAConverge,DeanGlowinski,dutchgroup,FengNeilan,BrennerNeilanMA2D,HL_ThreeDimensions}, nothing is known about non-local extensions.  We propose a novel numerical method for~\eqref{eq:PDE} that is inspired by recent developments in the numerical analysis of fully nonlinear elliptic equations, but requires significant new ideas in order to couple the unequal dimensions.  Because the techniques we utilise are motivated by existing convergence theory, there is a great reason to hope that a proof of convergence of this method will soon be developed. 

As a proof-of-concept, the new method is implemented in the 1D to 2D case.  However, we emphasise that this method employs techniques that are designed to easily generalise to higher dimensions.  A range of computational examples demonstrate that this is an effective approach for solving a very challenging problem.

\section{Numerical Solution of Monge-Amp\`ere equations}\label{sec:ma}
While nothing is known about the numerical solution of general non-local \MA equations of the form~\eqref{eq:PDE}, there has been a great deal of progress in the simpler case resulting from optimal transport between equal dimensions ($m=n$) with quadratic cost ($c(x,y) = \frac{1}{2}\abs{x-y}^2$).  In this setting, the PDE reduces to the second boundary value problem for the \MA equation:
\bq\label{eq:MA}
\begin{cases}
g(\nabla u(x))\det(D^2u(x)) = f(x), & x \in X\\
u \text{ is convex}\\
\nabla u(X) \subset Y.
\end{cases}
\eq

The \MA equation is an example of a second-order fully nonlinear elliptic PDE.
\begin{definition}[Degenerate elliptic]\label{def:elliptic}
The operator
$F:\Omega\times\R\times\R^n\times\Sf^n\to\R$
is \emph{degenerate elliptic} if 
\[ F(x,u,p,X) \leq F(x,v,p,Y) \]
whenever $u \leq v$ and $X \geq Y$.
\end{definition}

Much of the recent progress in the design of provably convergent methods for the \MA equation is inspired by the convergence framework of Barles and Souganidis~\cite{BSnum}.  This foundational work demonstrated that a consistent, monotone, stable numerical method will converge to the weak solution of the PDE, provided the underlying PDE satisfies a strong form of comparison principle (subsolutions always lie below supersolutions).

\begin{definition}[Consistency]\label{def:consistency}
The scheme $F^h(x,u(x),u(x)-u(\cdot))$ is \emph{consistent} with the PDE
\[ F(x,u(x),\nabla u(x), D^2u(x)) = 0, \quad x\in{\Omega} \]
 if for any smooth function $\phi\in C^\infty$ and $x\in{\Omega}$,
\[ \limsup_{h\to0,y\to x, z\in\G^h\to x,\xi\to0} F^h(z,\phi(y)+\xi,\phi(y)-\phi(\cdot)) \leq F^*(x,\phi(x),\nabla\phi(x),D^2\phi(x)), 
\]
\[ \liminf_{h\to0,y\to x, z\in\G^h\to x,\xi\to0} F^h(y,\phi(y)+\xi,\phi(y)-\phi(\cdot)) \geq F_*(x,\phi(x),\nabla\phi(x),D^2\phi(x)). \]
\end{definition}

\begin{definition}[Monotonicity]\label{def:monotonicity}
The scheme~$F^h(x,u(x),u(x)-u(\cdot))$ is \emph{monotone} if $F^h$ is a non-decreasing function of its final two arguments.
\end{definition}

\begin{definition}[Stability]\label{def:stability}
The scheme~$F^h(x,u(x),u(x)-u(\cdot))$ is \emph{stable} if there exists a constant $M\in\R$, independent of $h$, such that if $u^h$ is any solution to $F^h(x,u^h(x),u^h(x)-u^h(\cdot)) = 0$ than $\|u^h\|_\infty \leq M$ for all sufficiently small $h>0$.
\end{definition}



A nice property of monotone schemes is that it is generally very easy to ensure the existence of a solution by including, at most, a small perturbation to make the scheme \emph{proper}.

\begin{definition}[Proper]
The scheme~$F^h(x,u(x),u(x)-u(\cdot))$ is \emph{proper} if there exists some $C>0$ such that if $u \geq v$ then $F^h(x,u,p)-F^h(x,v,p) \geq C(u-v)$.
\end{definition}

Any monotone scheme $F^h$ can be perturbed to a proper scheme via
\[ F^h(x,u(x),u(x)-u(\cdot)) + \epsilon^h(x) u(x) \]
where $\epsilon^h(x) \to 0$ as $h\to0$.  A typical choice, which lends itself to nice stability properties, is to let $\epsilon^h$ scale like the truncation error of $F^h$.

\begin{theorem}[Existence and uniqueness~{\cite[Theorem~8]{ObermanEP}}]\label{thm:exist}
Let $F^h$ be a continuous, proper, monotone scheme. Then $F^h(x,u(x),u(x)-u(\cdot)) = 0$ has a unique solution.
\end{theorem}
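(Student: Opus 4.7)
The plan is to prove uniqueness via a discrete comparison principle and existence via a Banach fixed-point argument applied to an Euler-type map; both rely on combining monotonicity with properness in the essential way that makes this theorem possible.

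For uniqueness, I would suppose $u$ and $v$ are both solutions and that $u(x_0)-v(x_0) = \max_x(u(x)-v(x)) > 0$ at some grid point $x_0$. The maximum condition gives $u(x_0)-u(y) \geq v(x_0)-v(y)$ for every $y$, so monotonicity (Definition~\ref{def:monotonicity}) implies
\[ F^h(x_0,v(x_0),u(x_0)-u(\cdot)) \geq F^h(x_0,v(x_0),v(x_0)-v(\cdot)) = 0. \]
Properness, applied at $x_0$ with increment vector $p = u(x_0) - u(\cdot)$, then yields
\[ 0 = F^h(x_0,u(x_0),u(x_0)-u(\cdot)) \geq F^h(x_0,v(x_0),u(x_0)-u(\cdot)) + C(u(x_0)-v(x_0)), \]
and combining this with the previous inequality forces $0 \geq C(u(x_0)-v(x_0))$, contradicting $u(x_0) > v(x_0)$. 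Swapping the roles of $u$ and $v$ produces the reverse inequality, hence $u \equiv v$.

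For existence, I would introduce the Euler map $T_\tau u(x) = u(x) - \tau F^h(x,u(x),u(x)-u(\cdot))$ for a step $\tau > 0$ to be chosen. Fixed points of $T_\tau$ are precisely the solutions of $F^h = 0$, so it suffices to show that $T_\tau$ is a contraction on the finite-dimensional space of grid functions equipped with the $\ell^\infty$ norm. Given two grid functions $u,v$, evaluating $T_\tau u - T_\tau v$ at the point $x_0$ where $|u-v|$ attains its maximum and reusing the monotonicity/properness calculation above bounds
\[ F^h(x_0,u(x_0),u(x_0)-u(\cdot)) - F^h(x_0,v(x_0),v(x_0)-v(\cdot)) \]
from below by $C(u(x_0)-v(x_0))$, while continuity together with the finiteness of the grid supplies a matching upper Lipschitz bound of the form $L(u(x_0)-v(x_0))$. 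Selecting $\tau \in (0, 2/(C+L))$ then produces the contraction estimate $\|T_\tau u - T_\tau v\|_\infty \leq (1-\tau C)\|u-v\|_\infty$, and Banach's fixed-point theorem delivers the required solution.

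The main obstacle is the existence half, where the purely qualitative monotonicity and properness hypotheses must be upgraded to the two-sided Lipschitz control required to tune $\tau$; this is precisely where continuity of $F^h$ and the finite dimensionality of the grid enter essentially. Uniqueness, by contrast, is algebraic and uses only the inequalities themselves. Both arguments also clarify the role of the proper perturbation promoted earlier in the section: without a strictly positive constant $C$, neither the contraction rate nor the strict inequality in the comparison step could be closed.
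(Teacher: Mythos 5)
The paper does not give its own proof of this statement; it quotes the result directly from Oberman and uses it as a black box, so I can only assess your argument on its merits.

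Your uniqueness half is correct and is the standard discrete comparison argument. At a grid point $x_0$ where $u-v$ attains a strictly positive maximum, one has $u(x_0)-u(\cdot)\geq v(x_0)-v(\cdot)$ componentwise, so monotonicity in the final argument gives $F^h\bigl(x_0,v(x_0),u(x_0)-u(\cdot)\bigr)\geq 0$, and properness in the second argument then forces $0\geq C\bigl(u(x_0)-v(x_0)\bigr)>0$, a contradiction. This is clean and complete.

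The existence half has a genuine gap. Your Euler-map contraction requires a two-sided Lipschitz bound on $F^h$, but the hypothesis gives only \emph{continuity}, and the assertion that ``continuity together with the finiteness of the grid supplies a matching upper Lipschitz bound'' is false: a continuous function on a finite-dimensional space, even restricted to a compact set, need not be Lipschitz (the map $t\mapsto\sqrt{|t|}$ already defeats the claim). Without a Lipschitz constant $L$ there is no admissible $\tau$ for which the Euler step is nonexpansive, so Banach's theorem cannot be invoked from the stated hypotheses alone. There is also a secondary flaw even if one grants Lipschitz continuity: the inequality you ``reuse'' from the uniqueness step, $F^h[u]-F^h[v]\geq C(u-v)$, is only available at the single grid point where $u-v$ attains its maximum; it does not control $T_\tau u - T_\tau v$ at the remaining grid points, where the difference arguments $u(x)-u(\cdot)$ and $v(x)-v(\cdot)$ are not comparable. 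The argument that actually closes — and it does take Lipschitz continuity of $F^h$ as an input — is structured differently: for $\tau\leq 1/L$ one uses degenerate ellipticity to show that $T_\tau$ is \emph{order-preserving}, uses properness to show $T_\tau(v+c)\leq T_\tau v + (1-\tau C)c$ for constants $c\geq 0$, and then squeezes $v-\|u-v\|_\infty\leq u\leq v+\|u-v\|_\infty$ through $T_\tau$ to obtain $\|T_\tau u - T_\tau v\|_\infty\leq (1-\tau C)\|u-v\|_\infty$. If you truly wish to prove existence under mere continuity, you would need a different route — a Perron-type monotone iteration or a topological degree argument — with properness used only to secure a priori bounds, not a contraction rate.
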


The Barles-Souganidis convergence framework has inspired many different monotone discretisations of the \MA equation~\cite{FO_MATheory,feng2021narrow,WanMA,ObermanWS,benamou2016monotone,Nochetto_MAConverge,HL_ThreeDimensions}.
Typically, these discretisations involve a PDE operator that incorporates the convexity constraint ($D^2u(x) \geq 0$) through the use of a modified determinant satisfying
\bq\label{eq:detPlus}
{\det}^+(M) = \begin{cases} \det(M), & M \geq 0\\ \leq 0, & \text{otherwise}. \end{cases}
\eq
Monotone discretisations can then be constructed for the modified \MA equation
\bq\label{eq:MAop}
F(x,u(x),\nabla u(x), D^2u(x)) \equiv -g(\nabla u(x)){\det}^+(D^2u(x)) + f(x).
\eq

The Barles-Souganidis convergence framework does not immediately apply the second boundary value problem for \MA~\eqref{eq:MA}, which does not satisfy the required comparison principle.
However, this powerful framework has recently been extended to include this important PDE~\cite{Hamfeldt_BVP2,bonnet2022monotone}.  By adding mild additional structure to the numerical methods, these works establish the convergence of consistent, monotone methods.  Moreover, these works demonstrate that as long as the interior problem is set up carefully, there is a great deal of flexibility in the choice of numerical boundary conditions.  Indeed, even boundary conditions that are inconsistent with~\eqref{eq:MA} can nevertheless yield a numerical solution that converges to the true solution in $L^\infty(\Omega)$ for any compact subset $\Omega\subset X$.

A similar convergence result was recently established by one of the authors for a class of more general \MA type equations of the form
\bq\label{eq:MAsphere}
\begin{cases}
\psi(x,\nabla u(x))\det(D^2u(x)+A(x,\nabla u(x))) = f(x), & x \in \Sf^2\\
D^2u(x)+A(x,\nabla u(x)) \geq 0\\
\end{cases}
\eq
posed on the 2-sphere $\Sf^2$~\cite{HT_OTonSphereTheory}.  In particular, this work showed that consistent, monotone methods converge with the addition of mild additional structure (which is easily incorporated into the numerical method).

With these results in mind, a reasonable starting point for the numerical solution of non-local \MA type equations~\eqref{eq:PDE} is to design consistent, monotone methods.  Given recent progress in the numerical analysis of other \MA type equations, there is great reason to hope that it will soon be possible to produce a proof of convergence for such methods.

\section{PDE Formulation}\label{sec:pde}
The starting point of our numerical method for optimal transport between unequal dimensions is the PDE formulation~\eqref{eq:PDE} proposed by McCann and Pass.
We begin this section with a formal overview of the derivation of the PDE, referring to~\cite{mccannpass_otunequal} for complete details.  We then propose a local level set formulation that highlights the elliptic structure of the equation and will lead into the numerical method introduced in \autoref{sec:method}.

To formulate the optimal transport problem, we begin with a density function $f$ supported on a set $X\subset\R^n$, a density $g$ supported on a set $Y\subset\R^m$, and a cost function $c:X\times Y \to \R$.  We require the data to satisfy the mass balance constraint
\bq\label{eq:massbalance}
\int_X f(x)\,dx = \int_Y g(y)\,dy.
\eq
We assume here that $n<m$ and seek a mapping $T^*:Y\to X$ that minimises the cost
\bq\label{eq:Tmin}
T^* = \argmin\limits_{T\#g = f} \int_Y c(T(y),y)g(y)\,dy.
\eq

We remark here that, under appropriate conditions on the data, we can hope for a well-defined mapping from the higher-dimensional space $Y$ to the lower-dimensional space.  However, we expect that any ``mapping'' from $Y$ to $X$ would have to be multi-valued as points in the lower-dimensional space must ``spread'' to fill the higher-dimensional space.

\subsection{Feasibility}
In a traditional Optimal Transport problem ($n=m$), the pushforward condition $T\#g = f$ can be expressed via the change of variables theorem as
\bq\label{eq:cov} f(T(y))\det(\nabla T(y)) = g(y) . \eq
We now seek a similar respresentation of the pushforward condition for the unequal dimension setting ($n<m$).

We begin by expressing the condition $T\#g = f$ in the form
\bq\label{eq:pushforward1}
\int_X \Phi(x)f(x)\,dx = \int_Y \Phi(T(y))g(y)\,dy
\eq
for every $\Phi\in L^1(X)$.

Now we recall the coarea formula:
\bq\label{eq:coarea}
\int_Y r(y) JT(y)\,dy = \int_X\int_{T^{-1}(x)}r(y)\,dH_{m-n}(y)\,dx
\eq
for every $r\in L^1(Y)$.  Here $H_k$ denotes the $k$-dimensional Hausdorff measure and 
\bq\label{eq:squarejac} JT(y) = \sqrt{\det\left(\nabla T(y) \nabla T(y)^T\right)}.\eq

Next we make the particular choice
\[ r(y) = \frac{\Phi(T(y))g(y)}{JT(y)} \]
to obtain
\begin{align*}
\int_X\Phi(x)f(x)\,dx &= \int_Y \Phi(T(y))g(y)\,dy\\
  &= \int_Y r(y) JT(y)\,dy \\
	&= \int_X\int_{T^{-1}(x)}\frac{\Phi(T(y)) g(y)}{JT(y)}\,dH_{m-n}(y)\,dx.
\end{align*}

We notice that if $y\in T^{-1}(x)$ then $T(y) = x$, which allows us to rewrite the final integral as
\[ \int_X\Phi(x)f(x)\,dx = \int_X \Phi(x) \int_{T^{-1}(x)} \frac{g(y)}{JT(y)}\,dH_{m-n}(y)\,dx \]
for every $\Phi\in L^1(X)$.  From this we conclude that almost everywhere,
\bq\label{eq:pushforward}
f(x) = \int_{T^{-1}(x)}\frac{g(y)}{JT(y)}\,dH_{m-n}(y).
\eq

We note that in the special case of equal dimensions ($m=n$), this coincides with the usual result of the change of variables formula~\eqref{eq:cov}.

\subsection{Optimality}
Having characterised the feasibility condition $T\#g =f$, we now seek a representation of the optimal mapping. 
In the simplest case of equal dimensions ($m=n$) and a quadratic cost ($c(x,y) = \frac{1}{2}\abs{x-y}^2$), optimality is achieved by a mapping that can be expressed as the gradient of a convex function.

 Our starting point in the current setting is the usual dual formulation of the Optimal Transport problem~\cite{villani2021topics}.  This leads us to seek a $c$-convex dual pair $u(x),v(y)$ satisfying
\[ u(x)+v(y)+c(x,y) \geq 0\]
with equality when $x = T(y)$.

Equivalently, we require the inverse optimal mapping (which may be set-valued) to be contained in the $c$-subgradient of the function $u$, which we write as
\bq\label{eq:csubgrad}
T^{-1}(x) \subset \partial_c u(x) = \{y\in Y \mid c(x,y) + u(x) = \inf_{z\in X}\{c(z,y)+u(z)\}\}.
\eq

From here, we obtain the following first- and second-order optimality conditions:
\bq\label{eq:opt1}
\nabla u(x) + \nabla_xc(x,y) = 0, \quad x = T(y),
\eq
\bq\label{eq:opt2}
D^2u(x) + D^2_{xx} c(x,y) \geq 0, \quad x = T(y).
\eq

Differentiating the first-order optimality condition~\eqref{eq:opt2}  leads to the expression
\[
D^2u(T(y)) \nabla T(y) + D^2_{xx}c(T(y),y) \nabla T(y) + D^2_{xy}c(T(y),y) = 0.
\]

Under a slightly stronger version of the second-order optimality condition~\eqref{eq:opt2} (assuming positive definiteness instead of merely semi-positive definiteness), we can obtain the following expression for the Jacobian of the optimal mapping:
\bq\label{eq:jac}
\nabla T(y) = -\left(D^2u(T(y)) + D^2_{xx}c(T(y),y)\right)^{-1} D^2_{xy}c(T(y),y).
\eq

We can now combine this with the feasibility condition~\eqref{eq:pushforward}.  We first compute the relevant Jacobian determinant~\eqref{eq:squarejac}:
\begin{align*}
JT(y) &= \sqrt{\det\left(\nabla T(y) \nabla T(y)^T\right)}\\
   &= \frac{\sqrt{\det\left( D^2_{xy}c(T(y),y) \, D^2_{xy}c(T(y),y)^T \right)}}{\det\left(D^2u(T(y)) + D^2_{xx}c(T(y),y)\right)}.
\end{align*}

Finally, we substitute this into the pushforward condition~\eqref{eq:pushforward} and simplify to obtain the non-local \MA type equation
\bq\label{eq:nonlocalMA}
\begin{split}
f(x) &= \int_{T^{-1}(x)} \frac{\det\left(D^2u(T(y)) + D^2_{xx}c(T(y),y)\right)}{\sqrt{\det\left( D^2_{xy}c(T(y),y) \, D^2_{xy}c(T(y),y)^T \right)}} g(y)\,dH_{m-n}(y)\\
  &= \int_{\partial_cu(x)}\frac{\det\left(D^2u(x) + D^2_{xx}c(x,y)\right)}{\sqrt{\det\left( D^2_{xy}c(x,y) \, D^2_{xy}c(x,y)^T \right)}} g(y)\,dH_{m-n}(y).
\end{split}
\eq

We recall that this equation is coupled to the $c$-convexity constraint~\eqref{eq:opt2},
\bq\label{eq:cconvex} D^2u(x) + D^2_{xx}c(x,y) \geq 0, \quad y \in T^{-1}(x), \eq
which plays the role of the convexity constraint ($D^2u(x) \geq 0$) in the traditional \MA equation.

\subsection{Local formulation}
As written, equations~\eqref{eq:nonlocalMA}-\eqref{eq:cconvex} are non-local since the domain of integration involves the $c$-subgradient of $u$, which relies on global information~\eqref{eq:csubgrad}.  However, in many settings it is possible to replace this global definition with a local construction derived from the first- and second-order optimality conditions~\cite{mccannpass_otunequal}.

To accomplish this, we first define sets on which the first- and second-order optimality conditions~\eqref{eq:opt1}-\eqref{eq:opt2} hold:
\bq\label{eq:Y1}
Y_1(x,\nabla u(x)) = \{y \in Y \mid \nabla u(x) + \nabla_x c(x,y) = 0\},
\eq
\bq\label{eq:Y2}
Y_2(x,\nabla u(x),D^2u(x)) = \{y \in Y_1(x,\nabla u(x)) \mid D^2u(x) + D^2_{xx}c(x,y) \geq 0\}. 
\eq

These are necessary conditions for optimality in~\eqref{eq:csubgrad}, which leads to the containment
\bq\label{eq:subsets}
\partial_c u(x)  \subseteq Y_2(x,\nabla u(x),D^2u(x)).
\eq
However, when the problem has sufficient regularity (i.e. $C^2$ solutions), this containment becomes an equality up to $H_{m-n}$-negligible sets~\cite{mccannpass_otunequal}.  In this setting,~\eqref{eq:nonlocalMA} simplifies to
\bq\label{eq:localMA}
f(x) = \int_{Y_2(x,\nabla u(x), D^2u(x))}\frac{\det\left(D^2u(x) + D^2_{xx}c(x,y)\right)}{\sqrt{\det\left( D^2_{xy}c(x,y) \, D^2_{xy}c(x,y)^T \right)}} g(y)\,dH_{m-n}(y),
\eq
subject to the $c$-convexity condition~\eqref{eq:cconvex}.

This simplification of the domain of integration reduces the problem to a local PDE.  However, given the dependence of the domain of integration on second-order information, it is not at all obvious that this equation is actually elliptic and that the arguments developed in~\autoref{sec:ma} are reasonable.

To re-express this in a way that ellucidates the elliptic structure of this equation, we recall that the $c$-convexity constraint~\eqref{eq:cconvex} ensures that
\[ \det\left(D^2u(x) + D^2_{xx}c(x,y)\right) = {\det}^+\left(D^2u(x) + D^2_{xx}c(x,y)\right)\]
where ${\det}^+$ is a modified version of the determinant given by
\[{\det}^+(M) = \begin{cases}\det(M), & M \geq 0\\0, & \text{otherwise}. \end{cases}\]

This allows us to absorb the $c$-convexity constraint into the PDE and reformulate~\eqref{eq:cconvex} and~\eqref{eq:localMA} into the single equation
\bq\label{eq:localconvexMA}
f(x) = \int_{Y_2(x,\nabla u(x), D^2u(x))}\frac{{\det}^+\left(D^2u(x) + D^2_{xx}c(x,y)\right)}{\sqrt{\det\left( D^2_{xy}c(x,y) \, D^2_{xy}c(x,y)^T \right)}} g(y)\,dH_{m-n}(y).
\eq
Following the arguments of~\cite[Lemma~2.6]{Hamfeldt_BVP2}, we emphasise that solutions of this equation are automatically $c$-convex on the domain $X$ since the density function $f$ is positive and $f$ and $g$ are required to satisfy the mass balance constraint~\eqref{eq:massbalance}.

This observation makes a further simplification possible.  We notice that for any $x\in X$ and $y\in Y_1(x,\nabla u(x))$, either $y\in Y_2(x,\nabla u(x),D^2u(x))$ (if $D^2u(x) + D^2_{xx}c(x,y) \geq 0$) or ${\det}^+\left(D^2u(x) + D^2_{xx}c(x,y)\right) = 0$.  Thus we can replace the domain of integration in~\eqref{eq:localconvexMA} with the set of points satisfying the first-order optimality conditions to obtain
\bq\label{eq:localfirstMA}
f(x) = \int_{Y_1(x,\nabla u(x))}\frac{{\det}^+\left(D^2u(x) + D^2_{xx}c(x,y)\right)}{\sqrt{\det\left( D^2_{xy}c(x,y) \, D^2_{xy}c(x,y)^T \right)}} g(y)\,dH_{m-n}(y).
\eq

Importantly, equation~\eqref{eq:localfirstMA} is now a degenerate elliptic \MA type equation since the only second-order term is ${\det}^+\left(D^2u(x) + D^2_{xx}c(x,y)\right)$, while all coefficients of this term are non-negative.

\subsection{Level set formulation}
While \MA type equations (and their numerical solution) have received a great deal of attention in recent years, it is less clear how to deal with their integration over a manifold $Y_1(x,\nabla u(x))$ that itself depends on the solution of the equation.  Here we propose a level-set formulation of~\eqref{eq:localfirstMA} that transfers all of the $u$-dependence into the integrand.


First, we note that we can easily enlarge the domain of integration by introducing for each $x\in X$ a Dirac delta function $\delta$ supported on the $m-n$ dimensional manifold $Y_1(x,\nabla u(x))$.  To make this precise, we let $d_{Y_1(x,\nabla u(x))}(y)$ denote the Euclidean distance between $y$ and the manifold $Y_1(x,u'(x))$. This allows us to rewrite~\eqref{eq:localfirstMA} as
\bq\label{eq:MAdelta}
f(x) = \int_{Y}\frac{{\det}^+\left(D^2u(x) + D^2_{xx}c(x,y)\right)}{\sqrt{\det\left( D^2_{xy}c(x,y) \, D^2_{xy}c(x,y)^T \right)}} g(y) \delta(d_{Y_1(x,\nabla u(x))}(y))\,dy.
\eq

Next, we seek an easier representation of the delta function since the distance function may not be easy to compute in practice.  
To facilitate this, for each $x\in X$ we define the $n$ level-set functions
\bq\label{eq:phi} \phi^x_i(y) = \frac{\partial u}{\partial x_i}(x) + \frac{\partial c}{\partial x_i}(x,y), \quad i = 1, \ldots, n.\eq
We notice immediately that the domain of integration in~\eqref{eq:localfirstMA} is equivalent to the intersection of the zero level-sets of the $\phi^x_i$. That is,
\bq\label{eq:zero}
Y_1(x,\nabla u(x)) = \{y\in Y \mid \phi^x_i(y) = 0,  i= 1, \ldots, n\}.
\eq

Following~\cite{cheng2000level}, we can rewrite equation~\eqref{eq:MAdelta} in terms of these level set functions, though the specific form is dependent on the smaller dimension $n$.  When $n=1$, the equation is equivalent to
\bq\label{eq:MAlevel1}
f(x) = \int_{Y}\frac{{\det}^+\left(D^2u(x) + D^2_{xx}c(x,y)\right)}{\sqrt{\det\left( D^2_{xy}c(x,y) \, D^2_{xy}c(x,y)^T \right)}} g(y) \delta(\phi^x_1(y))\abs{\nabla\phi^x_1(y)}\,dy.
\eq
When $n=2$, the PDE becomes
\bq\label{eq:MAlevel2}
f(x) = \int_{Y}\frac{{\det}^+\left(D^2u(x) + D^2_{xx}c(x,y)\right)}{\sqrt{\det\left( D^2_{xy}c(x,y) \, D^2_{xy}c(x,y)^T \right)}} g(y) \delta(\phi^x_1(y)) \delta(\phi^x_2(y))\abs{\nabla\phi^x_1(y)} \abs{P_{\nabla\phi^x_1(y)}\nabla\phi^x_2(y)}\,dy
\eq
where $P_v w$ denotes a projection of the vector $w$ onto the subspace orthogonal to the vector $v$.

For higher values of the dimension $n$, this same procedure can be continued using additional orthogonal projections.

Finally, we note that the gradients of these level set functions are given by
\bq\label{eq:gradlevel}
\nabla\phi^x_i(y) = \nabla_y \frac{\partial c}{\partial x_i}(x,y),
\eq
which do not depend on the potential function $u$.


\section{Numerical Method}\label{sec:method}
In this section, we introduce a numerical method for solving the optimal transport problem between unequal dimensions, starting from the level-set PDE formulation introduced in \autoref{sec:pde}.  For simplicity of the exposition, we will describe the method in the low-dimensional setting ($n=1$, $m=2$).  However, all of the techniques we propose can be extended to higher dimensions.

Our goal here is to produce a consistent, monotone discretisation of the ODE operator in~\eqref{eq:MAlevel1}, which now takes the form
\bq\label{eq:MA1to2}
-\int_{Y}{\left(u''(x) + \frac{\partial^2c}{\partial x^2}(x,y)\right)^+}\psi(x,y)\delta\left(u'(x)+\frac{\partial c}{\partial x}(x,y)\right)\,dy + f(x).
\eq
Here we let
\bq\label{eq:psi}
\psi(x,y) = \frac{g(y)\abs{\nabla_y\frac{\partial c}{\partial x}(x,y)}}{\sqrt{\left(\frac{\partial^2c}{\partial x\partial y_1}(x,y)\right)^2 + \left(\frac{\partial^2c}{\partial x\partial y_2}(x,y)\right)^2}},
\eq
which is a known function with no dependence on the potential $u$.

Since to date little is known about this equation except in the classical setting, we will focus our numerical analysis on the setting in which $C^2$ solutions exist.  However, we expect that with ongoing development of the theory of weak solutions, we will be able to extend the numerical analysis to the non-smooth setting using the techniques of~\cite{Hamfeldt_BVP2}.

In particular, we make the following assumptions.
\begin{hypothesis}[Hypotheses on problem]\label{hyp}
\end{hypothesis}
\begin{enumerate}
\item[(H1)] The cost function $c\in C^4(\bar{X}\times\bar{Y})$.
\item[(H2)] The mixed partial derivatives of the cost $\sqrt{\left(\frac{\partial^2c}{\partial x\partial y_1}(x,y)\right)^2 + \left(\frac{\partial^2c}{\partial x\partial y_2}(x,y)\right)^2}$ are bounded away from zero.
\item[(H3)] The positive density functions $f\in C^0(\bar{X})$, $g\in C^2(\bar{Y})$ are bounded away from zero and infinity.
\item[(H4)] The solution of~\eqref{eq:MA1to2} is $u \in C^2(X)\cap C^1(\bar{X})$.
\end{enumerate}

\subsection{Discretisation of domains}
We begin by discretising the sets $X\subset\R$ and $Y\subset\R^2$.  Since $Y$ need only contain the support of the density $g$, we may assume that $Y$ is a square.

To discretise $X=[x_0,x_{max}]$, we choose an integer $N\in\N$ and introduce the grid spacing $h_X = (x_{max}-x_0)/N$.  Then grid points in the domain are given by $x_i = x_0+ih_X$ for $i = 0, \ldots, N$.

To discretise $Y = [y_0,y_{max}]^2$, we choose an integer $M\in\N$ and introduce the grid spacing $h_Y = (y_{max}-y_0)/M$.  Then grid points in the domain are given by $y_{jk} = (y_0+jh_Y,y_0+kh_Y)$ for $j,k=0,\ldots,M$.

We will require that asymptotically, $M\ll N$.  That is, the one-dimensional set $X$ needs to be more highly resolved than the two-dimensional set $Y$.

Throughout, we use the convention that the values of a function $u$ defined on the one-dimensional grid can be represented as $u_i = u(x_i)$.

\subsection{Discretisation of ODE}
We seek a finite difference discretisation of~\eqref{eq:MA1to2} of the form
\bq\label{eq:approx}
\begin{split}
F^{h_x,h_Y}_i&(u_i,u_i-u_{i-1},u_i-u_{i+1})\\ &= -h_Y^2\sum\limits_{j=0}^M\sum\limits_{k=0}^M \left(\Dt^{h_X}_{xx}u(x_i) + \frac{\partial^2c}{\partial x^2}(x_i,y_{jk})\right)^+\psi(x_i,y_{jk})\delta^{h_X,h_Y}(u_i-u_{i-1},u_i-u_{i+1})\\
&\phantom{=} + f(x_i) + (h_Y^2+h_X/h_Y) u_i
\end{split}
\eq
for $i = 1, \ldots, N-1$.
For $F^{h_X,h_Y}$ to be monotone, it must be a non-decreasing function of all of its arguments.  This, in turn, is guaranteed if each term appearing in the summation is non-negative and a non-increasing function of the differences $u_i-u_{i-1}$ and $u_i-u_{i+1}$.

The first term in the summation is easily approximated using standard centered differences via
\bq\label{eq:uxx}
\Dt_{xx}^{h_X}u_i = \frac{(u_{i-1}-u_i)+(u_{i+1}-u_i)}{h_X^2}.
\eq
This trivially has the required monotonicity properties, which are preserved in the operations of addition with a given grid function and restriction to the positive part.

Discretisation of the delta function, which is supported on a curve in $\R^2$, is much more delicate.  Indeed, naive extensions of typical discretisations of one-dimensional delta functions can lead to $\bO(1)$ errors in the approximation~\cite{tornbergSingular}.

We take as a starting point an approximate one-dimensional delta function defined in terms of the linear hat function via
\bq\label{eq:delta1}
\delta_\epsilon(z) = \frac{1}{\epsilon}\max\left\{1 - {\frac{\abs{z}}{\epsilon}},0\right\}.
\eq
Typically, the support of this discrete delta function is chosen to be proportional to the underlying grid spacing, that is $\epsilon \sim h_Y$.

It is tempting at this point to define the discrete delta function in~\eqref{eq:approx} by
\[ \delta^{h_X,h_Y}(u_i-u_{i-1},u_i-u_{i+1}) = \delta_{mh_Y}\left(\Dt_x^{h_X}u_i+\frac{\partial c}{\partial x}(x_i,y_{jk})\right) \]
where $\Dt_x^{h_X}u_i$ denotes some finite difference approximation of $u'(x_i)$ and $m\in\N$ is some fixed positive integer.  However, there are two problems with this: 
(1) fixing the value of $\epsilon$ in relation to the grid parameter $h_Y$ can lead to $\bO(1)$ errors~\cite{tornbergSingular} and (2) because of the nonlinearity, we cannot expect the overall approximation to be monotone even if $\Dt_x^{h_X}u_i$ is itself monotone.

Fortunately, the first problem can be resolved by allowing $\epsilon$ to vary in space~\cite{EngquistDelta}.  We begin by discretising the delta function in the $y$ variable only, allowing $x$ to be continuous.  Then we can introduce the approximate delta function
\bq\label{eq:deltaY}
\delta^{h_Y}\left(u'(x)+\frac{\partial c}{\partial x}(x,y_{jk})\right) = \delta_{\epsilon(h_Y,\nabla\phi^x(y))}\left(u'(x)+\frac{\partial c}{\partial x}(x,y_{jk})\right)
\eq
where the support parameter in the linear hat approximation to the delta function is given by
\bq\label{eq:epsilon}
\epsilon(h_Y,\nabla\phi^x(y)) = h_Y\max\{\|\nabla\phi^x(y)\|_{L^1(Y)},1\} = h_Y \max\left\{\| \nabla_y \frac{\partial c}{\partial x}(x,y)\|_{L^1(Y)},1\right\}.
\eq
We note that since the cost $c$ is a given function, this spatially varying parameter can be computed \emph{a priori}.

To achieve a monotone discretisation of this, we need to exploit the structure of the linear hat function.  To this end, we notice that
\begin{align*}
\delta_{\epsilon}&\left(u'(x)+\frac{\partial c}{\partial x}(x,y_{jk})\right) = \frac{1}{\epsilon}\max\left\{1 - {\frac{\abs{u'(x)+\frac{\partial c}{\partial x}(x,y_{jk})}}{\epsilon}},0\right\}\\
 & \phantom{===}=\frac{1}{\epsilon}\max\left\{1 - \frac{1}{\epsilon}{{\max\left\{u'(x)+\frac{\partial c}{\partial x}(x,y_{jk}),-u'(x)-\frac{\partial c}{\partial x}(x,y_{jk}),0\right\}}{}},0\right\}.
\end{align*}

This is automatically non-negative.  We can also produce a fully discrete approximation with the required monotonicity properties by utilising a careful combination of forward and backward differences.  This leads us to propose
\bq\label{eq:delta}
\begin{split}
\delta^{h_X,h_Y}&(u_i-u_{i-1},u_i-u_{i+1})\\ &= \frac{1}{\epsilon}\max\left\{1 - \frac{1}{\epsilon}{{\max\left\{\frac{u_i-u_{i-1}}{h_X}+\frac{\partial c}{\partial x}(x_i,y_{jk}),-\frac{u_{i+1}-u_i}{h_X}-\frac{\partial c}{\partial x}(x_i,y_{jk}),0\right\}}{}},0\right\}
\end{split}
\eq
with $\epsilon$ allowed to vary according to~\eqref{eq:epsilon}.

\subsection{Boundary conditions}
While Optimal Transport PDEs such as~\eqref{eq:localfirstMA} typically do not require a traditional boundary conditions\cite{Hamfeldt_BVP2}, the corresponding finite difference approximation~\eqref{eq:approx} will require some additional conditions to be supplied at the boundary grid points $x_0$ and $x_N$.


We have a great deal of freedom in the choice of condition to enforce at the remaining boundary point.  For the usual second boundary value problem for the \MA equation, it was shown in~\cite{Hamfeldt_BVP2} that even enforcing a possibly inconsistent boundary condition such as
\[ u(x) = 0, \quad x \in \partial X \]
would lead to a convergent numerical method.  However, this type of artificial boundary condition will result in a boundary layer in the computed solution.

To find a more natural boundary condition, we recall that the integrand in~\eqref{eq:MAlevel1} is supported on the set of points $y$ for which
\[ u'(x) + \frac{\partial c}{\partial x}(x,y) = 0. \]
In, particular, at the boundary point $x_i$ there must be some $y_i\in Y$ such that
\[ u'(x_i) = -\frac{\partial c}{\partial x}(x_i,y_i). \]
The cost function $c$ (and its partial derivatives) are all known functions.  However, the relevant values of $y$ are not obvious.  

We choose to proceed using a monotonicity assumption and enforce the Neumann boundary conditions
\begin{align*}
u'(x_0) &= \inf\limits_{y\in Y}\left\{-\frac{\partial c}{\partial x}(x_0,y)\right\},\\
u'(x_N) &= \sup\limits_{y\in Y}\left\{-\frac{\partial c}{\partial x}(x_N,y)\right\}.  
\end{align*}

These can easily be dicretised in a consistent, monotone way by defining the approximations
\bq\label{eq:bc2}
\begin{split}
F_0^{h_X,h_Y}(u_0,u_0-u_1) &= -\frac{u_1-u_0}{h_X} + \inf\limits_{y\in Y}\left\{-\frac{\partial c}{\partial x}(x_0,y)\right\} + h_X u_0,\\
F_N^{h_X,h_Y}(u_N,u_N-u_{N-1}) &= \frac{u_N-u_{N-1}}{h_X} - \sup\limits_{y\in Y}\left\{-\frac{\partial c}{\partial x}(x_0,y)\right\} + h_X u_N.
\end{split}
\eq

\subsection{Consistency and monotonicity}
We now verify the consistency and monotonicity of the approximation scheme described by~\eqref{eq:approx}-\eqref{eq:bc2}.  

First, we note that monotonicity was built into the approximation scheme by design.
\begin{lemma}[Monotonicity]\label{lem:mon}
Under the assumptions of Hypothesis~\ref{hyp}, the approximation scheme~\eqref{eq:approx}-\eqref{eq:bc2} is monotone.
\end{lemma}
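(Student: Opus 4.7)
The plan is to verify that the scheme $F^{h_X,h_Y}$ defined by~\eqref{eq:approx}-\eqref{eq:bc2} is non-decreasing in its final two arguments, the differences $u_i - u_{i-1}$ and $u_i - u_{i+1}$, as required by Definition~\ref{def:monotonicity}. The argument splits naturally into two cases: the interior scheme~\eqref{eq:approx} at grid points $i = 1, \ldots, N-1$ and the one-sided boundary schemes~\eqref{eq:bc2} at $i = 0, N$.

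For the interior, I would first establish that every term appearing in the double sum is non-negative, so that flipping sign via the leading $-h_Y^2$ will convert non-increasing dependence into non-decreasing dependence. The factor $\psi(x_i,y_{jk})$ is non-negative and independent of $u$; the factor $(\Dt^{h_X}_{xx}u_i + \partial^2 c/\partial x^2)^+$ is non-negative by definition of the positive part; and $\delta^{h_X,h_Y}$ is non-negative since it is obtained as a max with zero. I would then show that each summand is a non-increasing function of the two differences. The centered-difference operator can be rewritten as $\Dt^{h_X}_{xx} u_i = -[(u_i - u_{i-1}) + (u_i - u_{i+1})]/h_X^2$, which is evidently non-increasing in each difference; adding the fixed quantity $\partial^2 c/\partial x^2$ and composing with the non-decreasing map $(\cdot)^+$ preserves this property. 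For the discrete delta $\delta^{h_X,h_Y}$ in~\eqref{eq:delta}, I would unpack the nested maxes: the inner $\max$ contains $(u_i - u_{i-1})/h_X + \partial c/\partial x$, which is non-decreasing in $u_i - u_{i-1}$, and $-(u_{i+1}-u_i)/h_X - \partial c/\partial x = (u_i-u_{i+1})/h_X - \partial c/\partial x$, which is non-decreasing in $u_i - u_{i+1}$. Hence the inner $\max$ is non-decreasing in both differences; dividing by $-\epsilon$, adding $1$, and applying the outer $\max$ with $0$ flip monotonicity once, yielding the desired non-increasing discrete delta. Since pointwise products and sums of non-negative non-increasing functions are non-increasing, the whole double sum inherits this property, and multiplication by $-h_Y^2$ converts it to non-decreasing dependence. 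The proper perturbation $(h_Y^2 + h_X/h_Y)u_i$ depends only on $u_i$ itself, which is the first argument of $F^h$, so it does not affect monotonicity in the differences.

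For the boundary conditions, the verification is essentially by inspection. The scheme at $x_0$ has the form $F_0^{h_X,h_Y} = (u_0 - u_1)/h_X + (\text{terms independent of the differences}) + h_X u_0$, and $(u_0 - u_1)/h_X$ is manifestly non-decreasing in its argument $u_0 - u_1$. A symmetric computation yields the same conclusion for $F_N^{h_X,h_Y}$.

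The main subtlety, and the step I would take the most care over, is the monotonicity of the discrete delta $\delta^{h_X,h_Y}$. A naive substitution of any consistent one-sided or centred finite difference for $u'(x_i)$ into the linear hat function $\delta_\epsilon$ fails to be monotone because of the outer absolute value: the expression is non-decreasing in the difference on one side of the support and non-increasing on the other. The resolution, built into~\eqref{eq:delta} by design, is to rewrite $|z| = \max\{z, -z, 0\}$ and then assign a backward difference to the first branch and the negation of a forward difference to the second. This asymmetric choice ensures that each branch of the inner $\max$ is individually non-decreasing in exactly one of the differences $u_i - u_{i\pm 1}$; the combined expression is then non-decreasing in both, and the remaining operations flip it to the required non-increasing form. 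Once this delicate point is in hand, the rest of the proof follows from the stability of monotonicity under addition, multiplication by non-negative quantities, composition with non-decreasing functions, and maxima.
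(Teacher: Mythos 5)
Your proof is correct and takes essentially the same approach as the paper, which states the lemma without a separate proof, relying instead on the design discussion in Section 4 that you have faithfully reconstructed: non-negativity of each factor, the centred-difference operator written in terms of the differences, preservation of monotonicity under addition of grid functions and the positive-part map, the nested-max structure of the discrete delta, and inspection of the one-sided boundary scheme. The key subtlety you single out, namely that a single finite-difference substitution into the hat function would change sign of the $u$-dependence across the support and must be replaced by the asymmetric backward/forward splitting inside $\max\{z,-z,0\}$, is exactly the mechanism the paper highlights as the reason the delta discretisation \eqref{eq:delta} was constructed that way.
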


Next, we verify consistency of the scheme.  Because of the interplay between the finite difference discretisations and the approximation of the Dirac measure, this is a more delicate calculation that requires a careful balance between the discretisation parameters $h_X$ and $h_Y$.

\begin{lemma}[Consistency]\label{lem:consistent}
Under the assumptions of Hypothesis~\ref{hyp}, consider the approximation scheme~\eqref{eq:approx}-\eqref{eq:bc2} where the grid parameters are chosen so that $h_X/h_Y\to 0$ as $h_Y\to 0$.  Then the scheme is consistent with the ODE~\eqref{eq:MA1to2}.  Moreover, the formal truncation error is given by $\bO(h_Y^2+h_X/h_Y)$.
\end{lemma}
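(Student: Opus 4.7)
My plan is to fix an arbitrary smooth test function $\phi \in C^\infty$ and a point $x$ in the interior of $X$, then evaluate $F^{h_X,h_Y}_i(\phi(x_i)+\xi,\,\phi(x_i)-\phi(x_{i-1}),\,\phi(x_i)-\phi(x_{i+1}))$ along sequences $h_X, h_Y\to 0$ satisfying $h_X/h_Y\to 0$, with grid points $x_i\to x$ and $\xi\to 0$. Since every building block in~\eqref{eq:approx}--\eqref{eq:delta} is continuous in its arguments, the $\limsup$ and $\liminf$ in Definition~\ref{def:consistency} collapse to the same pointwise limit, and it suffices to prove convergence to the operator~\eqref{eq:MA1to2} with a quantitative remainder of order $\bO(h_Y^2 + h_X/h_Y)$.

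The first step is to handle the finite-difference pieces by Taylor expansion under (H1) and (H4). The centred operator gives $\Dt^{h_X}_{xx}\phi(x_i) = \phi''(x_i) + \bO(h_X^2)$, while the one-sided differences satisfy
\[
\tfrac{\phi(x_i)-\phi(x_{i-1})}{h_X} = \phi'(x_i) - \tfrac{h_X}{2}\phi''(x_i) + \bO(h_X^2),
\qquad -\tfrac{\phi(x_{i+1})-\phi(x_i)}{h_X} = -\phi'(x_i) - \tfrac{h_X}{2}\phi''(x_i) + \bO(h_X^2).
\]
Adding $\partial_x c(x_i,y_{jk})$ and taking the outer max in~\eqref{eq:delta}, the argument $M$ of $\delta^{h_X,h_Y}$ becomes $|\phi'(x_i)+\partial_x c(x_i,y_{jk})| - \tfrac{h_X}{2}\phi''(x_i) + \bO(h_X^2)$ uniformly in $j,k$; the extra maximum with zero is inactive except in an $\bO(h_X)$-neighbourhood of the support boundary of the discrete delta. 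Consequently, for each fixed $y_{jk}$,
\[
\delta^{h_X,h_Y}(\phi_i-\phi_{i-1},\phi_i-\phi_{i+1}) = \delta_{\epsilon}\!\left(\phi'(x_i)+\tfrac{\partial c}{\partial x}(x_i,y_{jk})\right) + \bO\!\left(\tfrac{h_X}{\epsilon^2}\right),
\]
with the perturbation supported in the $\bO(h_Y)$-wide band around the level curve $Y_1(x_i,\phi'(x_i))$.

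The main obstacle is converting these pointwise bounds into a quadrature estimate for the entire sum. The support band has area $\bO(h_Y)$ and therefore contains $\bO(1/h_Y)$ grid points $y_{jk}$; multiplying the pointwise delta error $\bO(h_X/h_Y^2)$ by the weight $h_Y^2$ and summing over these points contributes $\bO(h_X/h_Y)$ to the total error. For the leading-order piece, I would invoke the Engquist--Tornberg analysis of delta approximations with variable support~\cite{EngquistDelta,tornbergSingular}: the precise scaling $\epsilon = h_Y\max\{\|\nabla_y\partial_x c\|_{L^1},1\}$ in~\eqref{eq:epsilon} cancels the geometric factor coming from the coarea formula, so that the midpoint rule $h_Y^2\sum_{j,k}$ applied to $(\Dt^{h_X}_{xx}\phi+\partial^2_{xx}c)^+\psi\,\delta_\epsilon$ approximates the continuous integral $\int_Y (\phi''+\partial^2_{xx}c)^+\psi\,\delta_\epsilon\,dy$ to formal order $\bO(h_Y^2)$. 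Collapsing $\epsilon = h_Y\|\cdot\|_{L^1}\to 0$ in this regularised integral reproduces the Dirac surface integral $\int_{Y_1}(\phi''+\partial^2_{xx}c)^+\psi\,dH_1$ via the level-set identity~\eqref{eq:MAlevel1} together with~\eqref{eq:gradlevel}; (H2) ensures that $\|\nabla_y\partial_x c\|_{L^1}$ is uniformly bounded away from zero so that this limit is well defined.

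Finally, the proper perturbation $(h_Y^2+h_X/h_Y)(\phi(x_i)+\xi)$ vanishes in the limit because $\phi$ is bounded and $\xi\to 0$, and $f(x_i)\to f(x)$ by continuity (H3). Combining the three contributions gives the stated formal truncation error $\bO(h_Y^2 + h_X/h_Y)$, which in turn forces the asymptotic restriction $h_X/h_Y\to 0$. The boundary schemes~\eqref{eq:bc2} are handled separately: a one-sided Taylor expansion yields consistency with the Neumann conditions up to $\bO(h_X)$, and the perturbation term vanishes as before. The delicate point throughout is controlling the interaction between the kink of the hat function and the finite-difference approximation appearing in its argument; this is precisely why the uniform non-degeneracy (H2) and the grid scaling $h_X\ll h_Y$ are both required.
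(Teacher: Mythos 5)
Your proposal follows essentially the same decomposition as the paper's proof: you separate the $y$-quadrature error (controlled at $\bO(h_Y^2)$ via the Engquist--Tornberg analysis of the variable-support discrete delta), from the $x$-finite-difference error (controlled by combining the $\bO(h_X/\epsilon^2)$ Lipschitz bound on the hat function with the count of $\bO(1/h_Y)$ non-zero grid points in the support band, giving $\bO(h_X/h_Y)$), and conclude with the scaling $h_X\ll h_Y$. The paper carries this out with explicit notation for the semi- and fully-discrete integrals $I^{h_Y}, I^{h_X,h_Y}$ and the index sets $\I^{h_Y}, \I^{h_X,h_Y}$, but the underlying argument and the resulting error budget are identical to yours.
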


Before proving this lemma, we introduce some notation designed to simplify the representation of the semi- and fully-discrete versions of the approximation.

Denote the coefficient of the delta distribution in~\eqref{eq:MA1to2} by
\bq\label{eq:G}
G(x,y) = \left(u''(x) + \frac{\partial^2c}{\partial x^2}(x,y)\right)^+\psi(x,y) 
\eq
and let
\bq\label{eq:Gh}
\begin{split}
G^{h_X}(x_i,y) &= \left(\Dt_{xx}^{h_X}u_i+ \frac{\partial^2c}{\partial x^2}(x_i,y)\right)^+\psi(x_i,y)\\
  &= G(x_i,y) + \bO(h_X^2)
\end{split}
\eq
be the discretised version.

We recall that the level set function used to locate the curve $Y_1(x,u'(x))$ is given by
\bq\label{eq:phi}
\phi^x(y) = u'(x) + \frac{\partial c}{\partial x}(x,y),
\eq
while its discretised version satisfies
\bq\label{eq:phih}
\begin{split}
\abs{\phi^{h_X}(x_i,y)} &= \max\left\{\frac{u_i-u_{i-1}}{h_X}+\frac{\partial c}{\partial x}(x,y_{jk}),-\frac{u_{i+1}-u_i}{h_X}-\frac{\partial c}{\partial x}(x,y_{jk}),0\right\}\\
  &= \abs{\phi^{h_X}_{x_i}(y)} + \bO(h_X).
\end{split}
\eq

Using this notation, the integral appearing in equation~\eqref{eq:MA1to2} takes the form
\bq\label{eq:int}
I(x) = \int_Y G(x,y)\delta(\phi^x(y))\,dy,
\eq
while its semi- and fully-discrete approximations are
\bq\label{eq:intsemi}
I^{h_Y}(x) = h_Y^2\sum\limits_{j=0}^M\sum\limits_{k=0}^M \frac{1}{\epsilon}\max\left\{1-\frac{\abs{\phi^x(y_{jk})}}{\epsilon},0\right\} G(x,y_{jk}),
\eq
\bq\label{eq:intdisc}
I^{h_X,h_Y}(x_i) = h_Y^2\sum\limits_{j=0}^M\sum\limits_{k=0}^M \frac{1}{\epsilon}\max\left\{1-\frac{\abs{\phi^{h_X}({x_i},y_{jk})}}{\epsilon},0\right\} G^{h_X}(x_i,y_{jk}).
\eq

\begin{proof}[Proof of Lemma~\ref{lem:consistent}]
We begin by noting that the width of the support of the approximate delta function can be bounded in terms of the spatial discretisation parameter $h_Y$.  Let
\[ K = {\sup\limits_{x\in X} \| \nabla_y \frac{\partial c}{\partial x}(x,y) \|_{L^1(Y)}}.\]
Then $\epsilon(h_X,\nabla\phi^x(y)) \leq Kh_Y$ for every $x, y$.

We also notice that many of the terms in the sum of~\eqref{eq:intsemi} are zero.  To identify the non-zero terms, we define
\bq\label{eq:nonzero_semi}
\begin{split}
\I^{h_Y}(x) &= \{(j,k) \mid \abs{\phi^x(y_{jk})}<\epsilon\}\\
 &\subseteq \{(j,k) \mid \abs{\phi^x(y_{jk})}<Kh_Y\}.
\end{split}
\eq
Since $\phi^x(y)$ is uniformly continuous in both $x$ and $y$ and its zero level-set defines a curve in $Y$, the cardinality of this set will be
\[ \abs{\I^{h_Y}} = \bO(1/h_Y). \]

We can also try to identify the terms that are non-zero in the fully discrete sums in~\eqref{eq:intdisc}.  To this end, we set
\bq\label{eq:nonzero_disc}
\begin{split}
\I^{h_X,h_Y}(x_i) &= \{(j,k) \mid \abs{\phi^{h_X}(x_i,y_{jk})}<\epsilon\}\\
 &\subseteq \{(j,k) \mid \abs{\phi^{x_i}(y_{jk}) + \bO(h_X)}<Kh_Y \}.
\end{split}
\eq
Any point $(j,k)$ in this set will satisfy 
\[ \abs{y_{jk}-y} < \bO(h_X + h_Y) \]
for some $y \in \I^{h_Y}(x_i)$.  This means that for each such $y$, we need to consider $\bO\left(\dfrac{h_X+h_Y}{h_Y}\right) = \bO(1)$ possible neighbouring grid points that satisfy the conditions in~\eqref{eq:nonzero_disc}.  Adding up all possible points $y\in I^{h_Y}(x_i)$ leads to the bound
\[ \abs{\I^{h_X,h_Y}(x_i)} = \bO(1/h_Y).\]

Using the results of~\cite{EngquistDelta}, we know that the discretisation of the Dirac delta function satisfies
\[ I^{h_Y}(x) = I(x) + \bO(h_Y^2) \]
since the integrand $G(x,y)$ is $C^2$ in the $y$-variable. 

Finally, we can consider the approximation accuracy of the fully discrete version of the integral~\eqref{eq:intdisc}.
\begin{align*}
I^{h_X,h_Y}(x_i) &= h_Y^2 \sum\limits_{j=0}^M\sum\limits_{k=0}^M\frac{1}{\epsilon}\max\left\{1-\frac{\abs{\phi^{h_X}({x_i},y_{jk})}}{\epsilon},0\right\} G^{h_X}(x_i,y_{jk})\\
  &= h_Y^2 \sum\limits_{j=0}^M\sum\limits_{k=0}^M\frac{1}{\epsilon}\max\left\{1-\frac{\abs{\phi^{x_i}(y_{jk}) + \bO(h_X)}}{\epsilon},0\right\} G^{h_X}(x_i,y_{jk})\\
	&= h_Y^2\sum\limits_{j=0}^M\sum\limits_{k=0}^M\frac{1}{\epsilon}\max\left\{1-\frac{\abs{\phi^{x_i}(y_{jk}) }}{\epsilon},0\right\} G^{h_X}(x_i,y_{jk})\\ &\phantom{=} +\bO\left(\frac{h_Y^2}{\epsilon}\sum\limits_{(j,k)\in\I^{h_Y}(x_i) \cup\I^{h_X,h_Y}(x_i)}\frac{h_X}{\epsilon}G^{h_X}(x_i,y_{jk})\right).
\end{align*}
In the last step, we notice that the error term will only be non-zero at indices where the summand is non-zero in either the semi- or fully-discrete approximation of~\eqref{eq:intsemi}-\eqref{eq:intdisc}.

We can further simplify this to
\begin{align*}
I^{h_X,h_Y}(x_i) &=h_Y^2\sum\limits_{j=0}^M\sum\limits_{k=0}^M\frac{1}{\epsilon}\max\left\{1-\frac{\abs{\phi^{x_i}(y_{jk}) }}{\epsilon},0\right\} (G(x_i,y_{jk}) + \bO(h_X^2))\\&\phantom{=} + \bO\left(\frac{h_Y^2h_X}{\epsilon^2}\abs{\I^{h_Y}(x_i) \cup\I^{h_X,h_Y}(x_i)}\right)\\
&= I^{h_Y}(x_i) + \bO\left(\frac{h_X^2h_Y^2}{\epsilon}\abs{\I^{h_Y}(x_i)}\right) + \bO\left(\frac{h_Y^2h_X}{\epsilon^2}\abs{\I^{h_Y}(x_i) \cup\I^{h_X,h_Y}(x_i)}\right)\\
&= I(x_i) + \bO\left(h_Y^2 + \frac{h_X^2h_Y}{\epsilon} + \frac{h_Xh_Y}{\epsilon^2}\right).
\end{align*}

Finally, we recall that $h_Y \leq \epsilon = \bO(h_Y)$ so that
\[ I^{h_X,h_Y}(x_i) = I(x_i) + \bO\left(h_Y^2 + h_X^2 + \frac{h_X}{h_Y}\right). \]
By hypothesis, $h_X \ll h_Y$ so that the approximation of the integral is consistent with a formal truncation error of $\bO(h_Y^2 + h_X/h_Y)$.  Since the remaining terms in the approximation~\eqref{eq:approx} share the same consistency error, this completes the proof.
\end{proof}

We also make the remark here that the formal consistency error in the scheme can achieve an optimal value of $\bO(h_X^{2/3})$ if we make the scaling choice $h_Y = \bO(h_X^{1/3})$.  In particular, this means that the two-dimensional set $Y$ can (and should) have a much lower resolution than the one-dimensional set $X$.  We also note that typical convergence analysis for \MA equations in optimal transport involves compactness arguments without error bounds; actual computational error need not coincide with the formal truncation error.

\subsection{Normalisation}
Finally, we notice that solutions of the ODE~\eqref{eq:MA1to2} are only determined up to additive constants.  The particular constant is not typically important to applications, as the information about the transport mapping is encoded in $u'(x)$.

For the purposes of this article, we will seek to approximate the mean-zero solution of~\eqref{eq:MA1to2}.  There is no particular reason that the approximate solution we obtain from~\eqref{eq:approx}-\eqref{eq:bc2} should have mean zero, or even that the mean need be the same given different values of the discretisation parameters $h_X, h_Y$.  This is typical of \MA equations in optimal transport, and an easy solution is to simply shift the approximate solution~\cite{HT_OTonSphereTheory}.

Thus we propose the following two-step procedure:
\begin{enumerate}
\item Let $v^{h_X,h_Y}$ be the unique solution of
\[ F^{h_X,h_Y}_i(v_i^{h_X,h_Y}(x),v_i^{h_X,h_Y}(x)-v^{h_X,h_Y}_{i-1},v_i^{h_X,h_Y}(x)-v^{h_X,h_Y}_{i+1}) = 0 \]
for all $i = 0, \ldots, N$.
\item Normalise to
\[ u^{h_X,h_Y}_i = v^{h_X,h_Y}_i - \frac{1}{N}\sum\limits_{j=1}^Nv^{h_X,h_Y}_j.\]
\end{enumerate}

\section{Computational Results}\label{sec:results}
Finally, we validate our proposed numerical method on several computational examples.

In each example, we take as our domain $X = [0,1]$, which is discretised using $N$ grid points.  The computational target set is taken to be the smallest possible square that encloses the actual target $Y$.

The grid spacing parameters are chosen to satisfy $h_Y \approx h_X^{1/3}$, with slight deviations from equality used to ensure that this leads to an integer $M$ number of grid points along each dimension of $Y$.

The resulting systems of nonlinear equations were solved using Newton's method.

\subsection{Rectangular target}
The first example we consider involves a rectangular domain
\[ Y = \{(y_1,y_2) \mid 1 \leq y_1 + y_2 \leq e+2, y_1-1 \leq y_2 \leq y_1+1\}. \]
Note that this domain is rotated, so it will not line up with the grid used to discretise the $y$ variables.

We solve~\eqref{eq:ot} using the density functions
\begin{align*} f(x) &= (e^x+2)(e^x+2x),\\  g(y_1,y_2) &= \begin{cases}y_1 + y_2, & (y_1,y_2) \in Y \\ 0, & \text{otherwise}\end{cases}\end{align*}
together with the quadratic cost function
\[ c(x,y_1,y_2) = \frac{1}{2}(x-y_1)^2 + \frac{1}{2}(x-y_2)^2. \]

See Figure~\ref{fig:ex2data} for a visual of the problem data.

\begin{figure}[htp]
\centering
\subfigure[]{
\includegraphics[width=0.45\textwidth]{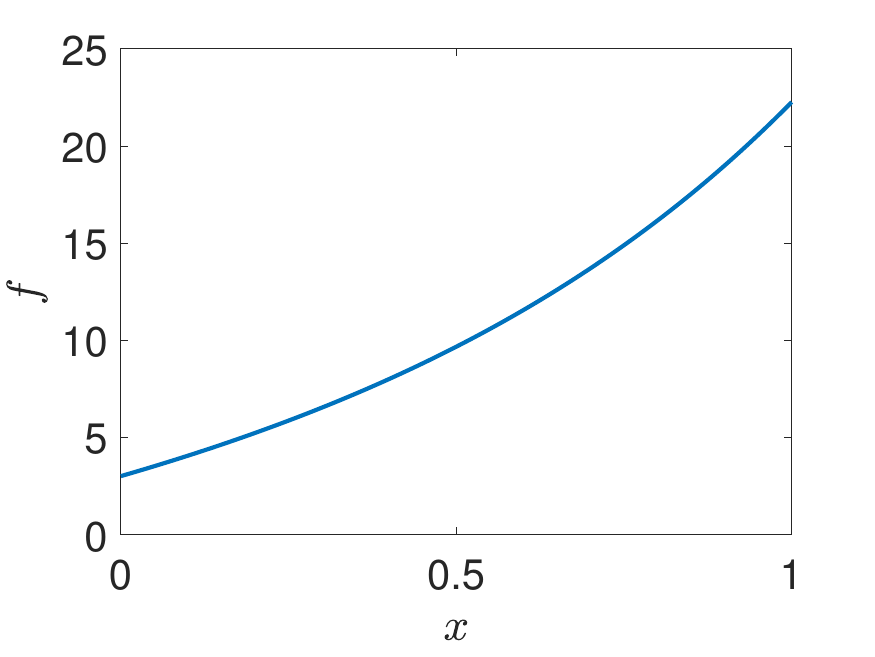}\label{fig:ex2f}}
\subfigure[]{
\includegraphics[width=0.45\textwidth]{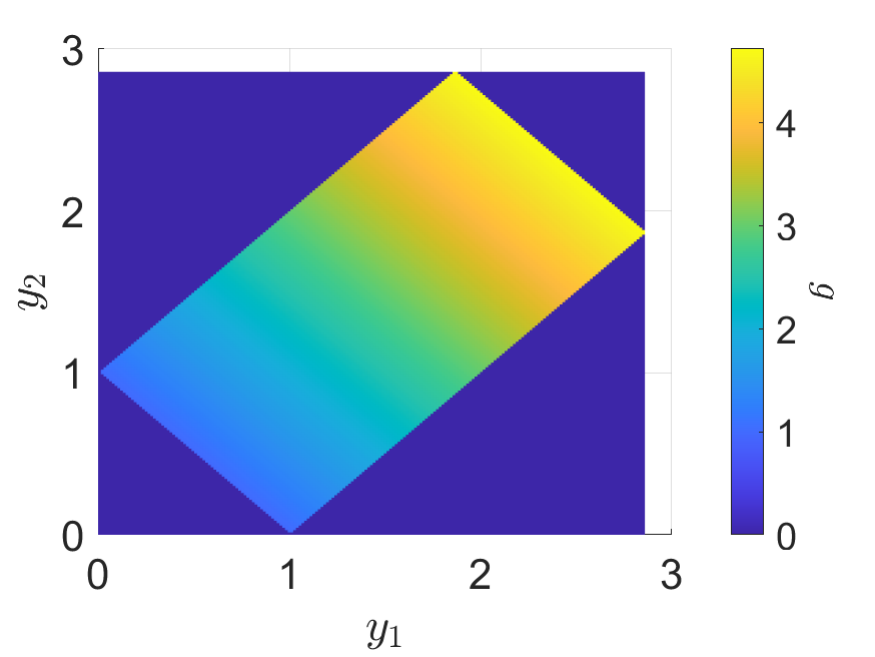}\label{fig:ex2g}}
\caption{The density functions \subref{fig:ex2f}~$f$ and \subref{fig:ex2g}~$g$ for the rectangular target.}
\label{fig:ex2data}
\end{figure}

We can easily verify that the exact mean-zero solution of the resulting \MA equation is
\[ u(x) =  e^x - e+1.\]

The resulting error is displayed in Figure~\ref{fig:ex2err}.  We notice that for this particular example, the error decreases, but not monotonically.  It is well known that monotone finite difference schemes do not have to yield monotonic convergence.  In this case, it appears to be the simplicity of the geometry that leads to the oscillations in error.  This is because for some choices of discretisation parameters $M, h_Y$, the grid is by chance quite well-aligned with the relevant curve $Y_1(x,u'(x))$, the contours of $g$, and the boundaries of the target $Y$. This, in turn, can lead to artificially low error for certain values of the discretisation parameters.  

\begin{figure}[htp]
\centering
\includegraphics[width = 0.6\textwidth]{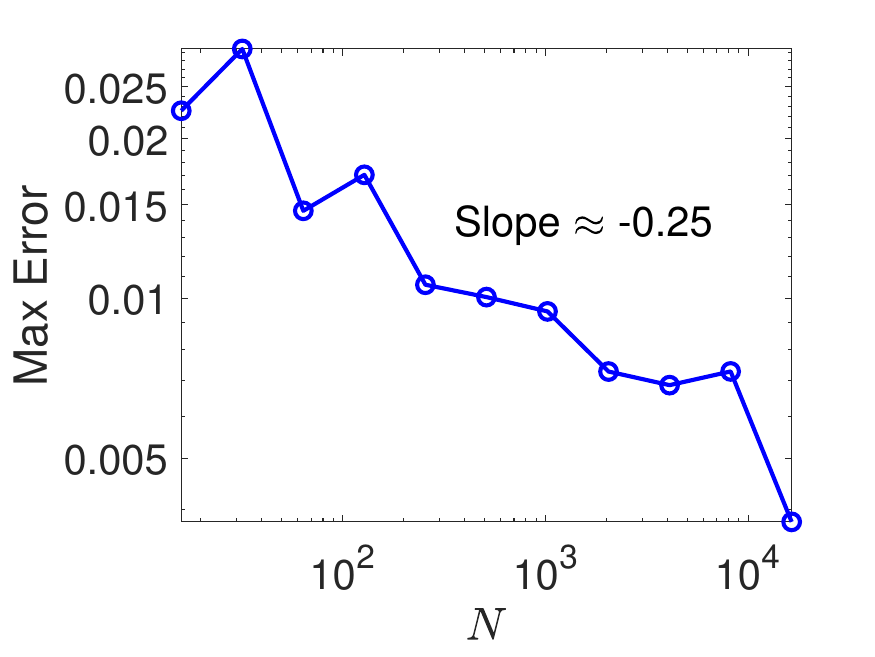}
\caption{Maximum error in $u$ for the rectangular target.}
\label{fig:ex2err}
\end{figure}

\subsection{Vanishing densities}
The next example involves another rectangular domain
\[ Y = \{(y_1,y_2) \mid 0 \leq y_1 + y_2 \leq 2, y_1-2 \leq y_2 \leq y_1+2\} \]
and the quadratic cost
\[ c(x,y_1,y_2) = \frac{1}{2}(x-y_1)^2 + \frac{1}{2}(x-y_2)^2 \]
as in the previous example.  This time, we use density functions that are allowed to vanish to zero at the boundaries of $X$ and $Y$.  This setting is known to lead to a loss of uniform ellipticity in the \MA equation, which is often a challenge for numerical methods. We choose the particular density functions
\begin{align*}
f(x) &= \frac{128}{3}x(1-x),\\
g(y_1,y_2) &= \begin{cases}
(y_1+y_2)\left(2-(y_1+y_2)\right)\left((y_2-y_1)-2\right)\left(2-(y_2-y_1)\right), & (y_1,y_2) \in Y\\
0, & \text{otherwise.}
\end{cases}
\end{align*}

See Figure~\ref{fig:ex4data} for a visual of the problem data.   

\begin{figure}[htp]
\centering
\subfigure[]{
\includegraphics[width=0.45\textwidth]{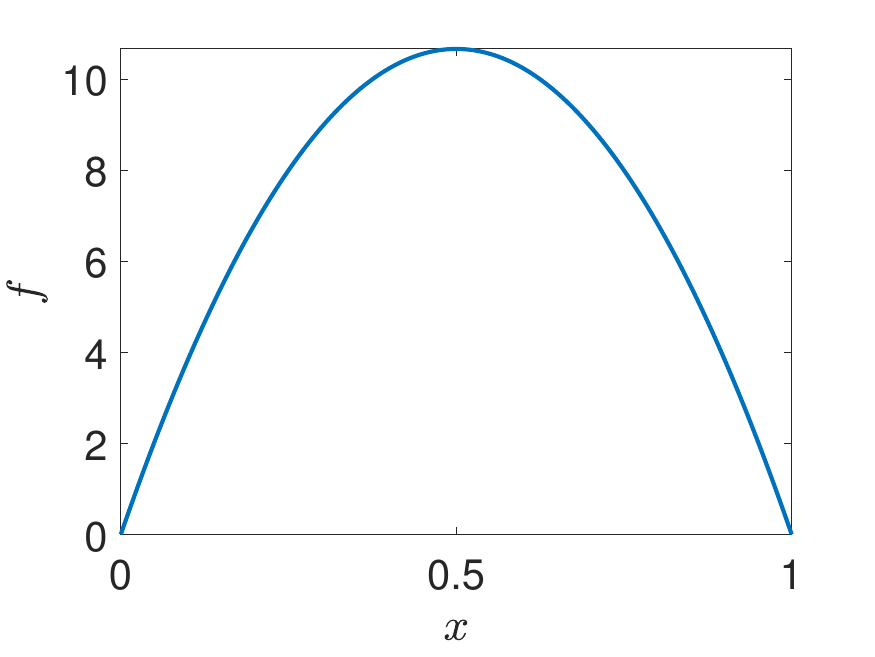}\label{fig:ex4f}}
\subfigure[]{
\includegraphics[width=0.45\textwidth]{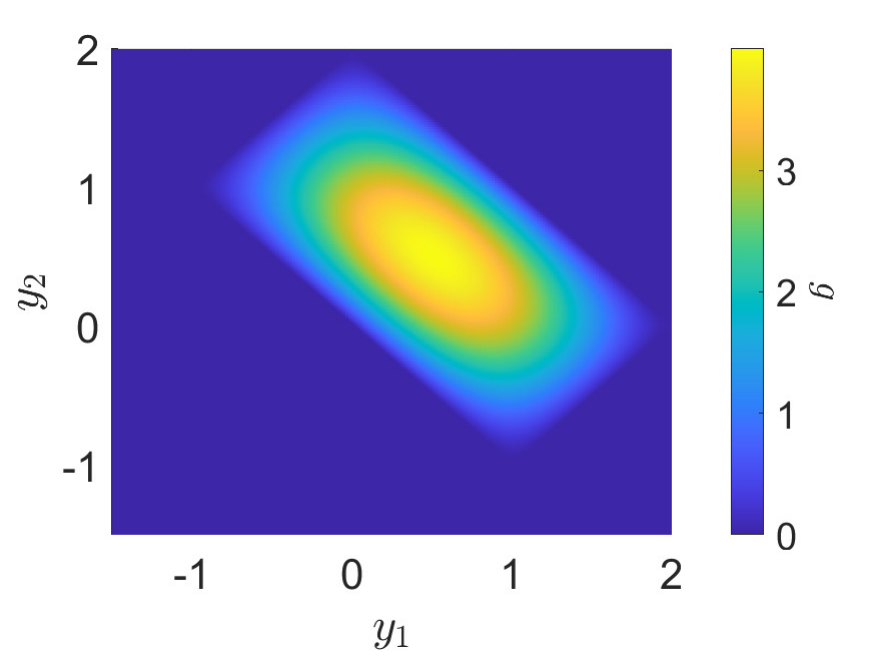}\label{fig:ex4g}}
\caption{The density functions \subref{fig:ex4f}~$f$ and \subref{fig:ex4g}~$g$ for the rectangular target.}
\label{fig:ex4data}
\end{figure}

In this case, the exact solution is the constant function
\[ u(x) =  0.\]

Although the exact solution is constant, we cannot expect to recover this exactly (which would occur with a typical finite difference discretisation of a standard \MA equation).  This is because the approximation scheme will still involve (approximate) integration of a spatially varying function over a non-trivial curve, which will introduce numerical errors.

The error obtained for this example is displayed in Figure~\ref{fig:ex4err}.  In this example, the error appears to decrease monotonically, likely because the contours of $g$ are no longer straight lines that can align with the grid.  Despite the degeneracy of this example, there is no difficult in computing this solution. 

\begin{figure}[htp]
\centering
\includegraphics[width = 0.6\textwidth]{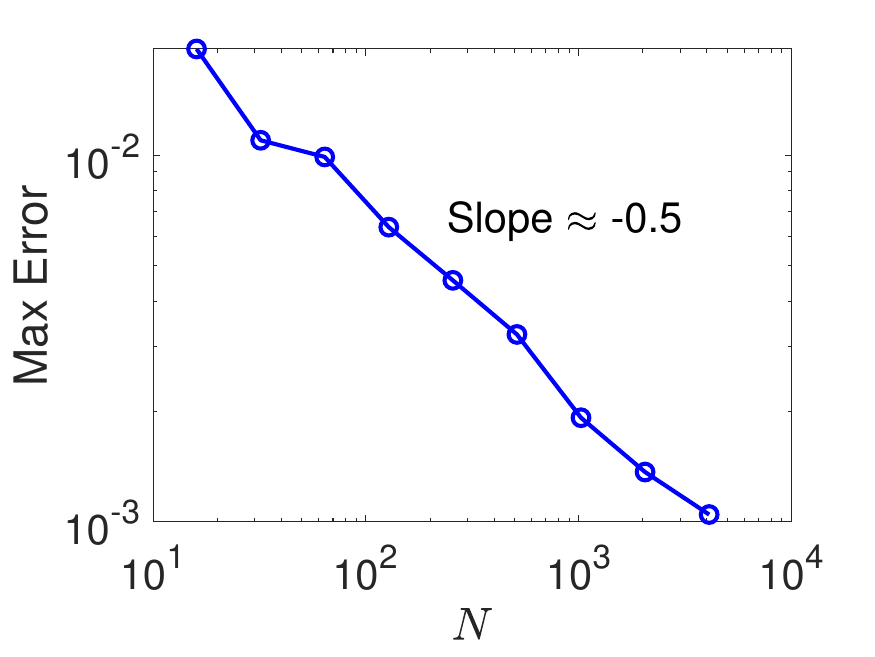}
\caption{Maximum error in $u$ for the vanishing densities.}
\label{fig:ex4err}
\end{figure}

\subsection{Curved target}
The final example we consider involves the curved domain
\[ Y = \left\{(y_1,y_2) \mid -1 \leq y_1 \leq 1, -\frac{\pi}{2} \leq y_2 - \sqrt{1-y_1^2} \leq 0\right\}. \]

We solve~\eqref{eq:ot} using the density functions
\begin{align*} f(x) &= \frac{\pi^3}{8}\sin(\pi x),\\  g(y_1,y_2) &= \begin{cases}\sqrt{1-y_1^2}-y_2, & (y_1,y_2) \in Y \\ 0, & \text{otherwise}\end{cases}.\end{align*}
These densities are also allowed to vanish along part of the boundary.  In this example, we use the cost function
\[ c(x,y_1,y_2) = x\left(y_2-\sqrt{1-y_1^2}\right). \]

See Figure~\ref{fig:ex3data} for a visual of the problem data.

\begin{figure}[htp]
\centering
\subfigure[]{
\includegraphics[width=0.45\textwidth]{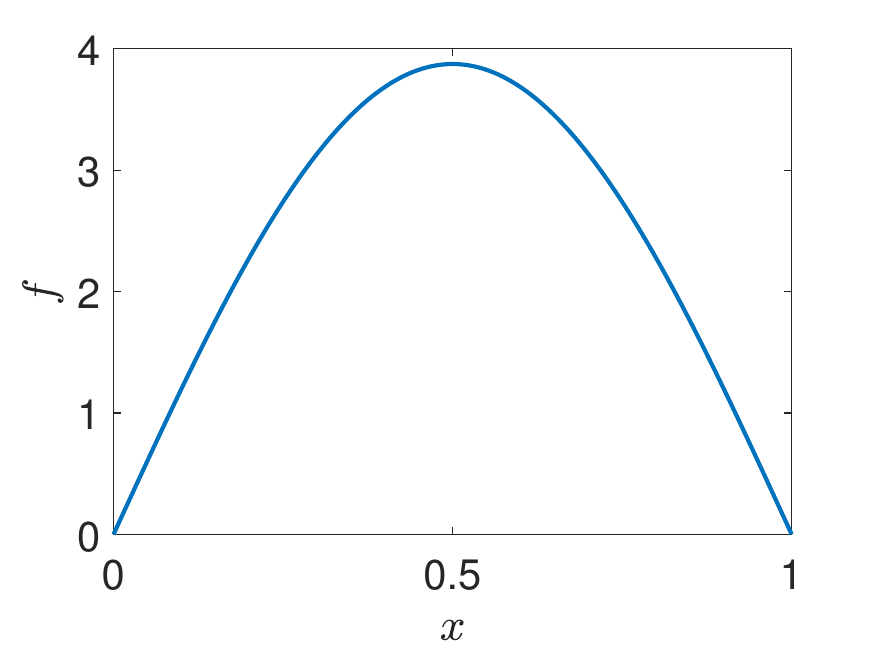}\label{fig:ex3f}}
\subfigure[]{
\includegraphics[width=0.45\textwidth]{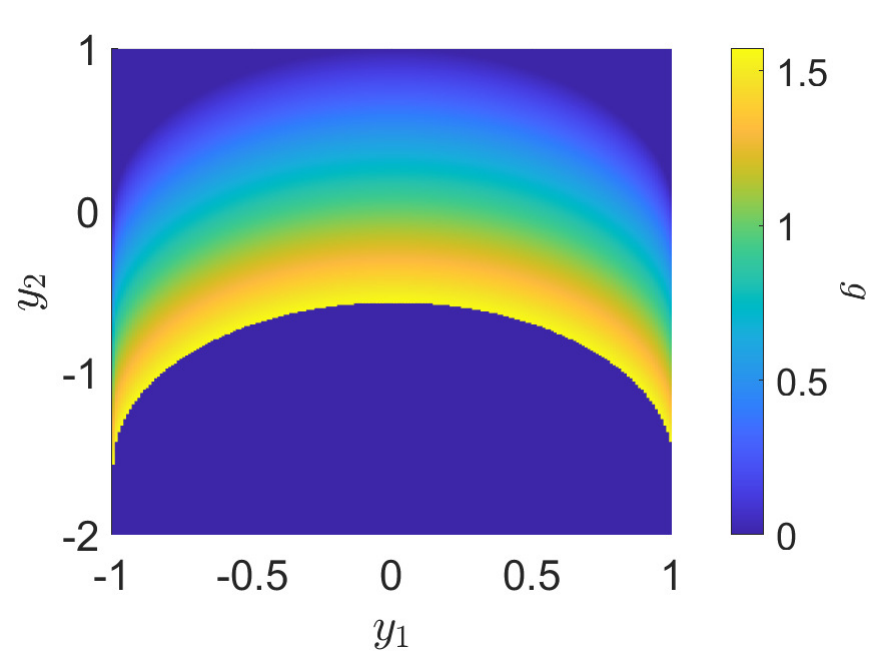}\label{fig:ex3g}}
\caption{The density functions \subref{fig:ex3f}~$f$ and \subref{fig:ex3g}~$g$ for the rectangular target.}
\label{fig:ex3data}
\end{figure}

The exact mean-zero solution of the resulting \MA equation is
\[ u(x) = -\cos\left(\frac{\pi x}{2}\right) + \frac{2}{\pi}.\]

The resulting error is displayed in Figure~\ref{fig:ex3err}.  Once again, we observe clear monotonic convergence.  The more complicated geometry does not pose any difficulty for the numerical method.

\begin{figure}[htp]
\centering
\includegraphics[width = 0.6\textwidth]{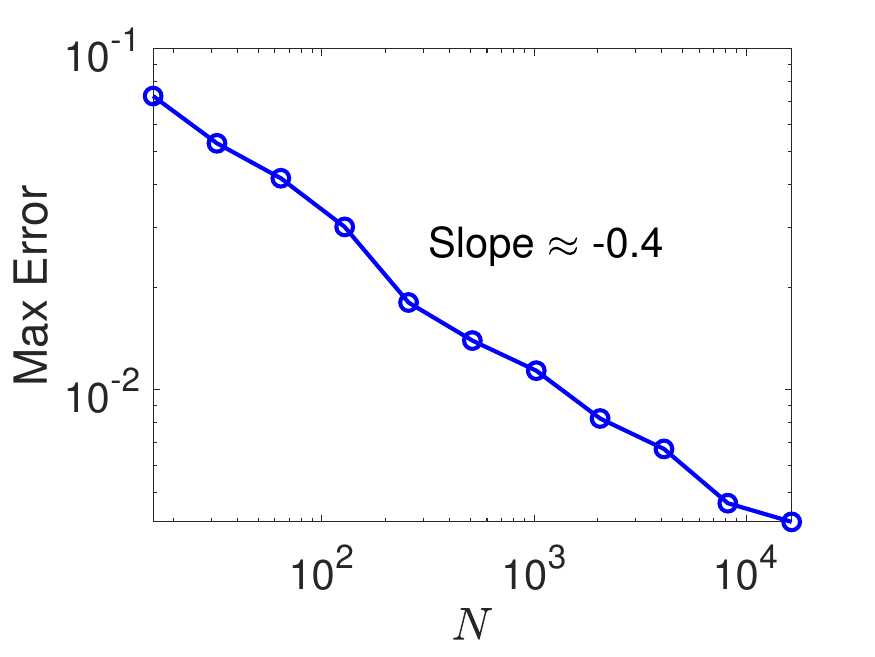}
\caption{Maximum error in $u$ for the curved target.}
\label{fig:ex3err}
\end{figure}

\section{Conclusions}\label{sec:conclusions}
In this article, we have proposed a numerical method for a \MA type equation arising in the problem of optimal transport between unequal dimensions.  In additional to the strong nonlinearity of this problem, the equation involves an extra layer of complexity because it requires integrating the equation over a manifold that itself depends on the solution of the \MA equation.

We have proposed a level-set formulation of this PDE, which allows the integral to be posed over a much simpler domain.  However, the resulting equation still involves (1) a fully nonlinear elliptic operator of \MA type and (2) integration against a Dirac delta distribution whose support is dependent on the solution gradient.

We have proposed a very novel method for numerically approximating the resulting equation.  For simplicity, we focus on the setting of optimal transport from 1D to 2D.  However, all of the techniques we proposed can be easily extended to higher dimensions.  The numerical method utilises monotone finite difference approximations of the \MA type operator.  These are combined with a very careful discrete version of the Dirac delta function, which is allowed to have a spatially varying support width in order to preserve consistency.  The level set function that is input into this discrete delta function is discretised using a careful combination of forward and backward differences in order to preserve monotonicity.

The resulting numerical method is both consistent and monotone.  For more well-understood \MA type equations in optimal transport, consistency and monotonicity have proven to be the key ingredients in proofs of convergence~\cite{bonnet2022monotone,Hamfeldt_BVP2,HT_OTonSphereTheory}.  With ongoing development of the theory for the non-standard \MA equation considered in this article, there is great reason to hope that a convergence proof can be constructed for the method proposed here.  This will be an avenue of continued research.

We have performed computational tests using several challenging examples, which validate the performance of the new method.

A natural avenue for future work is to extend this numerical method to higher dimensions.  An advantage of the current method is that the higher-dimensional set $Y$ can be discretised using a fairly coarse grid, which is important for keeping the computational cost reasonable.  Future work will further improve the computational cost by devising better strategies to estimate the support of the discrete delta function, allowing for a great reduction in the cost of the numerical quadrature.  We expect the resulting method to be easy to implement, while having a computational cost equivalent to integrating over an $m-n$-dimensional manifold (instead of the full $m$-dimensional target set) at each point in the lower-dimensional domain $X$.

\bibliographystyle{plain}
\bibliography{OT1Dto2D}

\end{document}